\tikzset{main node/.style={circle,fill=white!20,draw,minimum size=.4cm,inner sep=0pt},
            }
   \xpatchcmd{\@thm}{\fontseries\mddefault\upshape}{}{}{} 
\newtheorem{theorem}{Theorem}[section]
\newtheorem{lemma}[theorem]{Lemma}
\newtheorem{proposition}[theorem]{Proposition}
\theoremstyle{definition}
\newtheorem{definition}[theorem]{Definition}
\newtheorem{example}[theorem]{Example}
\newtheorem{remark}[theorem]{Remark}
\numberwithin{equation}{section}
\newcommand{\K}{\mathcal{K}}
\newcommand{\Ll}{\mathcal{L}}
\newcommand{\Z}{\mathcal{Z}}
\newcommand{\M}{\Z_{\K_n}(D^1,S^0)}
\journal{Topology and Its Application}
\begin{document}

\begin{frontmatter}


\title{Genus of The Hypercube Graph And Real Moment-Angle Complexes}



\author{Shouman Das\corref{cor1}}
\ead{shouman.das@rochester.edu}

\cortext[cor1]{Corresponding author}
\address{Department of Mathematics, University of Rochester, Rochester NY USA}

\begin{abstract}
In this paper we demonstrate a calculation to find the genus of the hypercube graph $Q_n$ using real moment-angle complex $\Z_\K(D^1,S^0)$ where $\K$ is the boundary of an $n$-gon. We also calculate an upper bound for the genus of the quotient graph $Q_n/C_n$, where $C_n$ represents the cyclic group with $n$ elements.

\end{abstract}

\begin{keyword}
Topology\sep  polyhedral product\sep moment-angle complex\sep graph theory\sep combinatorics

\MSC[2010] 05C10 \sep 57M15
\end{keyword}

\end{frontmatter}

\tableofcontents


\section*{Introduction}
In graph theory, the hypercube graph is defined as the 1-skeleton of the $n$-dimensional cube. The graph theoretical properties of this graph has been studied extensively by Harary et al in \cite {Harary}. It is well known that this graph has genus $1+(n-4)2^{n-3}$. This fact was proved by Ringel in \cite{Ringel}, Beineke and Harary in \cite{MR0175805}. The moment-angle complex or polyhedral product has been studied recently in the works of Buchstaber and Panov \cite{BP}, Denham and Suciu \cite{DS}, Bahri et al. \cite{BBCG}. 
In this paper, we give an embedding of the hypercube graph in the real moment-angle complex and calculate the genus of the hypercube graph. This demonstrates an interesting relationship between the geometry of hypercube graph and real moment-angle complex.

\section{Genus of a Graph}

\begin{definition}
The \textit{hypercube graph} $Q_n$ for $n\geq 1$ is defined with the following vertex and edge sets.
\begin{align*}
V &= \{(a_1,\cdots, a_n)\ |\ a_i = 0 \text{ or } 1\}\\ 
&= \textrm{  the set of all ordered binary } n\textrm{-tuples with entries of 0 and 1 }\\
E &= \{(u,v)\in V\times V\ |\ u \text{ and } v \text{ differ at exactly one place}\}
\end{align*}

\end{definition}
It is straightforward to see that the hypercube graph can also be defined recursively as a cartesian product \cite[p.~22]{HararyBook}.
$$
Q_1 = K_2, \quad Q_n = K_2 \square Q_{n-1}.
$$
Now we will define the genus of a graph. In this paper, a `surface' will mean a closed compact orientable manifold of dimension of 2. A graph embedding in a surface means a continuous one to one mapping from the topological representation of the graph into the surface. More explanation about graph embeddings can be found at \cite{TopGraph}. 

\begin{definition}
The \emph{genus} $\gamma(G)$ of a graph $G$ is the minimal integer $n$ such that the graph can be embedded in the surface of genus $n$. In other words, it is the minimum number of handles which needs to be added to the 2-sphere such that the graph can be embedded in the surface without any edges crossing each other.
\begin{figure}
\centering
\includegraphics[scale=0.6]{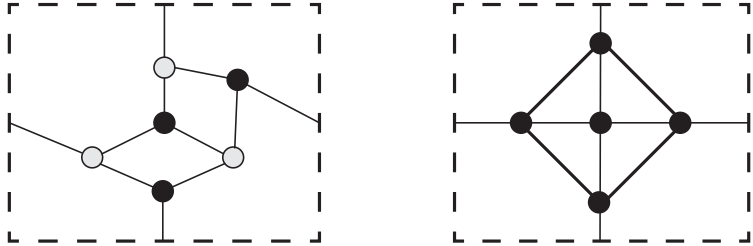}
\label{toroidal}
\caption{$K_{3,3}$ and $K_5$ embedded in a torus.}
\end{figure}

\begin{example}
All planar graphs have genus 0. The complete graph with 5 vertices denoted by $K_5$ and the complete bipartite graph with 6 vertices denoted by $K_{3,3}$ both have genus 1 ( Figure~\ref{toroidal}). The non-planarity of these graphs denoted by $K_n$ and $K_{m,n}$ are explained in \cite[chapter~6]{West}. 
\end{example}

\end{definition}

\begin{definition}[2-cell embedding]
Assume that G is a graph embedded in a surface. Each region of the complement of the graph is called a face. If each face is homeomorphic to an open disk, this embedding is called a 2-cell embedding. 
\end{definition}
In this paper we will restrict our attention to 2-cell embeddings of graphs because the embedding of the hypercube graph in a real moment-angle complex is a 2-cell embedding. We describe it in the next section. 

Now restricting our attention to 2-cell embeddings of a graph $G$ in a surface with genus $g$, we can see that  $$\gamma_M(G) = \max \{g\ |\ G \textrm{ has a 2-cell embedding on a surface
with genus }g\}$$ must exist. This is true because if a graph has a 2-cell embedding in a surface $S_g$, then each handle of the surface must contain at least one edge. So we have a loose upper bound of  $\gamma_M(G)\leq e$ (See \cite{perez} for further explanation). So we can define the maximum genus of a finite connected graph as follows.
\begin{definition}[Maximum genus]
The maximum genus $\gamma_M(G)$ of a connected finite graph $G$ is the maximal integer $m$ such that $G$ has a 2-cell embedding on the surface of genus $m$.
\end{definition}

Two theorems which are important tools in the analysis of graph embeddings follow next.
\begin{theorem}[Euler's Formula] 
Let a graph $G$ has a 2-cell embedding in the surface $S_g$ of genus $g$, with the usual parameter $V, E, F$. Then
\begin{equation}
|V|-|E|+|F|=2-2g
\end{equation}
\end{theorem}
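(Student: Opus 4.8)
The plan is to recognize the statement as the topological invariance of the Euler characteristic, specialized to surfaces. The hypothesis that the embedding is a \emph{2-cell} embedding is exactly what the argument needs: since every face is homeomorphic to an open disk, the images of the vertices, edges, and faces of $G$ equip the surface $S_g$ with the structure of a finite CW-complex, having $\abs{V}$ cells of dimension $0$, $\abs{E}$ cells of dimension $1$, and $\abs{F}$ cells of dimension $2$. The alternating sum $\abs{V} - \abs{E} + \abs{F}$ is then, by definition, the cellular Euler characteristic $\chi(S_g)$ computed from this particular CW-structure.

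First I would invoke the fact that the Euler characteristic is a homeomorphism invariant: for any finite CW-complex $X$ one has $\chi(X) = \sum_i (-1)^i \operatorname{rank} H_i(X)$, and the right-hand side depends only on the homotopy type of $X$. Consequently $\abs{V} - \abs{E} + \abs{F} = \chi(S_g)$ no matter which graph $G$ or which 2-cell embedding produced the cell structure. This reduces the theorem to the single computation $\chi(S_g) = 2 - 2g$.

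Next I would evaluate $\chi(S_g)$ directly from the known homology of the closed orientable surface of genus $g$, namely $H_0(S_g) \cong \mathbb{Z}$, $H_1(S_g) \cong \mathbb{Z}^{2g}$, and $H_2(S_g) \cong \mathbb{Z}$, which yields $\chi(S_g) = 1 - 2g + 1 = 2 - 2g$. Alternatively, one can bypass homology and argue by induction on $g$: the base case $g = 0$ is the classical Euler polyhedron formula $\abs{V} - \abs{E} + \abs{F} = 2$ for the sphere, and attaching a single handle can be realized by deleting two faces and identifying their boundary cycles, an operation that leaves $\abs{V} - \abs{E}$ unchanged while decreasing $\abs{F}$ by $2$, so that $\abs{V} - \abs{E} + \abs{F}$ drops by exactly $2$ with each added handle.

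The main obstacle is the invariance step, i.e. establishing that $\abs{V} - \abs{E} + \abs{F}$ depends only on the homeomorphism type of $S_g$ and not on the chosen embedding. This is the substantive content, and it is precisely what forces the 2-cell hypothesis: without it the complement of $G$ need not be a disjoint union of disks, the regions fail to assemble into a CW-structure on $S_g$, and the alternating sum can exceed $2 - 2g$. I expect the cleanest route is to cite the standard invariance of the Euler characteristic for finite CW-complexes from algebraic topology, after which the genus computation $\chi(S_g) = 2 - 2g$ is immediate.
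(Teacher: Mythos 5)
Your proposal is mathematically correct. Note, however, that the paper offers no proof of its own here: it simply refers the reader to Gross and Tucker's \emph{Topological Graph Theory}, Chapter 3, so there is no in-paper argument to compare against. The standard treatment in that reference is combinatorial--topological (rotation systems and reduction of the polygonal identification to a normal form for surfaces), whereas you route everything through algebraic topology: the 2-cell hypothesis upgrades the embedding to a CW-structure on $S_g$, homotopy invariance of the Euler characteristic gives $|V|-|E|+|F|=\chi(S_g)$ independently of the embedding, and the homology of the closed orientable genus-$g$ surface gives $\chi(S_g)=1-2g+1=2-2g$. Your approach is cleaner and shorter at the cost of importing cellular homology as a black box; the combinatorial approach is self-contained but longer. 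Two small points worth acknowledging if you write this up: (i) passing from ``each complementary region is an open disk'' to ``the embedding defines a CW-structure'' quietly uses that the closure of each face is the image of a closed disk under an attaching map carrying the boundary circle into the graph, which is the substantive content of the 2-cell hypothesis; (ii) in your alternative induction, the handle-attachment step (delete two faces, identify their boundary cycles) requires the two boundary cycles to be identified compatibly, e.g.\ after subdividing so they have equal length, before one can assert that $|V|-|E|$ is unchanged. Neither point is a gap, just bookkeeping to make explicit.
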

\begin{proof}
See \cite[Chapter~3]{TopGraph}
\end{proof}
\begin{theorem}[Duke\cite{Duke}]
A graph $G$ has a 2-cell embedding in a surface $S_g$ of genus g if and only if $\gamma(G)\leq g \leq \gamma_M(G)$.
\end{theorem}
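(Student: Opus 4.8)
\section*{Proof proposal}

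The statement is an interpolation theorem: it asserts that the genera of the $2$-cell embeddings of $G$ fill the entire integer interval $[\gamma(G),\gamma_M(G)]$ with no gaps. My plan is to split into the two implications and concentrate the effort on the converse. The forward implication (``only if'') is essentially definitional: if $G$ has a $2$-cell embedding in $S_g$, then $g\le\gamma_M(G)$ is immediate from the definition of maximum genus, while for $\gamma(G)\le g$ I would invoke the classical fact (due to Youngs) that a minimum-genus embedding of a connected graph is automatically a $2$-cell embedding, so that $\gamma(G)$ equals the minimum genus taken over $2$-cell embeddings as well; since our embedding is one such, $\gamma(G)\le g$. Hence every $g$ realized by a $2$-cell embedding lies in $[\gamma(G),\gamma_M(G)]$.

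The real content is the converse, which I would prove by a combinatorial ``discrete intermediate value'' argument on rotation systems. First I would encode each $2$-cell embedding of the connected graph $G$ by a rotation system: a set of darts (half-edges), an involution $\alpha$ pairing the two darts of each edge, and a permutation $\sigma$ whose cycles record the cyclic order of darts around each vertex. The faces of the embedding are in bijection with the cycles of the permutation $\sigma\alpha$, so that $|F|$ is the number of such cycles, and Euler's Formula then reads $2-2g=|V|-|E|+|F|$, determining $g$ from the combinatorics alone.

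Next I would introduce the elementary move: at a single vertex, swap two darts that are consecutive in its cyclic order, leaving all other cyclic orders unchanged. A short computation shows this replaces $\sigma$ by $\sigma$ composed with a $3$-cycle supported on three darts at that vertex, whence $\sigma\alpha$ is multiplied by a conjugate $3$-cycle. Since a $3$-cycle is a product of two transpositions, and multiplying a permutation by a transposition changes its number of cycles by exactly one, the number of faces changes by $0$ or $\pm 2$; by Euler's Formula the genus therefore changes by at most $1$ under a single elementary move. I would then note that any rotation system can be carried to any other by a finite sequence of such moves, since adjacent transpositions generate all reorderings of each vertex's cyclic rotation.

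Finally I would run the interpolation argument. Choosing a $2$-cell embedding attaining $\gamma(G)$ and one attaining $\gamma_M(G)$, I connect their rotation systems by a chain of elementary moves; along this chain the genus is an integer-valued quantity that starts at $\gamma(G)$, ends at $\gamma_M(G)$, and changes by at most $1$ at each step, so it assumes every integer value in between, yielding for each such $g$ a $2$-cell embedding of that genus. I expect the main obstacle to be the face-count lemma, namely verifying carefully that the elementary move multiplies the face permutation by a $3$-cycle and hence alters $|F|$ by an \emph{even} amount (never by an odd amount, which would violate orientability and the integrality of $g$); once this is established, the intermediate-value conclusion and the connectivity of rotation systems are routine.
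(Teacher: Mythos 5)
The paper does not actually prove this statement: it records it as a known result and defers entirely to Duke's original paper, so there is no in-text argument to compare yours against. Your sketch is, in substance, the standard proof of the interpolation theorem (and close in spirit to Duke's own argument, recast in the modern language of rotation systems), and I find it correct. The two pillars are exactly right: (i) the Heffter--Edmonds correspondence between rotation systems of a connected graph and its orientable $2$-cell embeddings, with $|F|$ equal to the number of cycles of $\sigma\alpha$; and (ii) the local move of transposing two consecutive darts at one vertex, which replaces $\sigma$ by $\sigma c$ for a $3$-cycle $c$, hence multiplies the face permutation by the conjugate $\alpha c\alpha$, changing the cycle count by $0$ or $\pm 2$ and the genus by at most $1$. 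Combined with the fact that adjacent transpositions connect any two rotation systems, the discrete intermediate-value argument goes through. Two small points of bookkeeping: in the forward direction you do not need Youngs' theorem at all, since $\gamma(G)\le g$ is immediate from the fact that a $2$-cell embedding is in particular an embedding; where you genuinely \emph{do} need Youngs' theorem is in the converse, to guarantee that a $2$-cell embedding of genus exactly $\gamma(G)$ exists as the starting point of your chain --- you have the right ingredient, just attached to the wrong implication. You should also note that the elementary move is vacuous at a vertex of degree $\le 2$ (the ``$3$-cycle'' degenerates), which is harmless since such moves can simply be omitted from the chain.
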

The last theorem tells us that if there exist 2-cell embeddings of a graph in surfaces of genera $m$ and $n$ with $m\leq n$, then for any integer $k$ with $m\leq k \leq n$, there exists a 2-cell embedding of the graph in a surface with genus $k$. A detailed explanation and proof of this theorem can be found in Richard A. Duke's original paper \cite{Duke}. 

Using these theorems, we can find a lower bound for the genus of the hypercube graph. Let a graph $G$ is embedded in a surface and $f_i$ denote the number of faces which has $i$ edges as its boundary. So we have $$
2|E| = \sum_{i} if_i
$$
For the hypercube graph, each face must have at least 4 edges as its boundary. Therefore, $$
2|E|= \sum_{i\geq4}if_i\geq \sum_{i} 4 f_i=4|F|
$$
which implies $|F|\leq\frac{|E|}{2}$. Now using Euler's formula,
\begin{align*}
g=\ & 1-\frac{|V|}{2}+\frac{|E|}{2}-\frac{|F|}{2}\\
&\geq1-\frac{|V|}{2}+\frac{|E|}{2}-\frac{|E|}{4} = 1-\frac{|V|}{2}+\frac{|E|}{4}
\end{align*}
But for the hypercube graph $Q_n$, we have $|V|=2^n, |E|=n2^{n-1}$. So using the above inequality we get a lower bound\footnote{See \cite{HararyIneq} for more detail on the inequalities involving genus of a graph} for the genus of a hypercube graph,
\begin{equation}
\gamma(Q_n)\geq 1- 2^{n-1}+n2^{n-3}= 1+(n-4)2^{n-3}
\end{equation}

To show that this lower bound can be achieved, we will use the real moment-angle complex. In fact, we prove the following theorem.
\begin{theorem}\label{main}
For $n\geq 3$, the hypercube graph can be embedded in a surface with genus $1+(n-4)2^{n-3}$. Moreover, this embedding is a 2-cell embedding.
\end{theorem}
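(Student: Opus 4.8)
The plan is to realize the desired surface as the real moment-angle complex $M=\Z_\K(D^1,S^0)$ associated to the pair $(D^1,S^0)=([-1,1],\{-1,1\})$, where $\K$ is the boundary of the $n$-gon, i.e.\ the $1$-dimensional simplicial complex on vertices $\{1,\dots,n\}$ whose edges are $\{i,i+1\}$ read cyclically. Recall that $M$ is the subspace of the cube $[-1,1]^n$ consisting of those points $x$ for which $\{\,i : x_i\in(-1,1)\,\}$ is a simplex of $\K$. This subspace carries a canonical polyhedral cell structure in which a cell is specified by a pair $(\sigma,\varepsilon)$, where $\sigma\in\K$ records the coordinates allowed to vary in $(-1,1)$ and $\varepsilon\colon\{1,\dots,n\}\setminus\sigma\to\{-1,1\}$ assigns the remaining coordinates a sign; such a cell has dimension $|\sigma|$. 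First I would make this cell structure precise and record that, since $\K$ is a cycle, the simplices are $\emptyset$, the $n$ vertices, and the $n$ edges, so that the numbers of $0$-, $1$-, and $2$-cells are $2^n$, $n2^{n-1}$, and $n2^{n-2}$ respectively.

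The second step is to identify the $1$-skeleton of $M$ with the hypercube graph. The $0$-cells are exactly the sign vectors in $\{-1,1\}^n$, which we identify with $\{0,1\}^n=V(Q_n)$. A $1$-cell $(\{i\},\varepsilon)$ joins the two vertices obtained from $\varepsilon$ by inserting $x_i=-1$ and $x_i=+1$; these differ in precisely the $i$-th coordinate, and conversely every edge of $Q_n$ arises exactly once in this way because every singleton $\{i\}$ is a vertex of $\K$. Hence the $1$-skeleton of $M$ is canonically isomorphic to $Q_n$. Moreover every $2$-cell $(\{i,i+1\},\varepsilon)$ is a square, hence an open disk, so the inclusion $Q_n\hookrightarrow M$ realizes $Q_n$ as the $1$-skeleton of a CW decomposition of $M$ with open-disk faces; this is exactly a $2$-cell embedding once we know $M$ is a surface.

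Next I would verify that $M$ is a closed surface by checking it is locally Euclidean. Compactness is clear, and connectivity follows from connectivity of $Q_n$. Along each $1$-cell exactly two $2$-cells meet: a $1$-cell with free coordinate $j$ lies in the boundary of precisely one square for each of the two edges of $\K$ containing $j$. At each $0$-cell the surrounding edges and squares assemble into a link that is itself a copy of the cycle $\K$, hence a circle, so every point has a disk neighborhood. With $M$ a closed surface, Euler's formula gives $\chi(M)=2^n-n2^{n-1}+n2^{n-2}=(4-n)2^{n-2}$, and orientability would then force $2-2g=(4-n)2^{n-2}$, i.e.\ $g=1+(n-4)2^{n-3}$.

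The main obstacle is orientability, which is what distinguishes the real moment-angle complex from its non-orientable cousins and is needed to read off the genus from $\chi$. I would prove it by exhibiting a coherent orientation of the square faces: orient the face $(\{i,i+1\},\varepsilon)$ by $\pm\,dx_i\wedge dx_{i+1}$ and solve the resulting system of sign constraints, one per $1$-cell, demanding that the two adjacent squares induce opposite orientations on their shared edge. The only possible failure is a monodromy obstruction accumulated on going once around the cycle $\K$; I expect to show this product of signs is trivial, so a global orientation exists (alternatively one may invoke the known orientability of real moment-angle manifolds over polygons). Granting this, $M$ is the orientable surface of genus $1+(n-4)2^{n-3}$, the embedding $Q_n\hookrightarrow M$ is a $2$-cell embedding, and, matching the lower bound already established, this is in fact exactly the genus of $Q_n$.
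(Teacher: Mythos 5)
Your proposal is correct in outline and lands in the same ambient space as the paper --- the real moment-angle complex $\Z_{\K_n}(D^1,S^0)$ over the boundary of the $n$-gon, with $Q_n$ sitting inside as the vertex-only subcomplex $\Z_{\Ll_n}(D^1,S^0)$ whose complement is a disjoint union of open squares --- but you compute the genus of the ambient surface by a genuinely different route. The paper argues by induction: $\Z_{\K_{n+1}}$ is obtained from two copies of $\Z_{\K_n}$ by deleting the $2^{n-2}$ squares indexed by one edge of the polygon and attaching $2^{n-2}$ one-handles, giving $\Z_{\K_{n+1}}=\Z_{\K_n}\#\Z_{\K_n}\#(2^{n-2}-1)(S^1\times S^1)$ and hence the genus recursion. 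You instead count cells ($2^n$, $n2^{n-1}$, $n2^{n-2}$), verify local Euclidean-ness directly (exactly two squares per edge; the link of a vertex is a copy of $\K$, hence a circle), and read the genus off from $\chi=(4-n)2^{n-2}$. Your route makes the $2$-cell property and the Euler characteristic transparent and avoids the induction; the paper's decomposition buys an explicit handle/connected-sum picture of the surface. The one step you leave as an expectation is orientability, and your remark that ``the only possible failure is a monodromy obstruction accumulated on going once around the cycle $\K$'' undersells it: the obstruction is a class in $H^1(M;\mathbb{Z}/2)$ and there are $2g$ independent loops to check, not one. Fortunately your plan does close, and with an explicit formula: assign to the square $(\{i,i+1\},\varepsilon)$ the orientation $s\,dx_i\wedge dx_{i+1}$ with $s=\prod_{k\notin\{i,i+1\}}\varepsilon_k$. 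The compatibility condition at an edge with free coordinate $j$, shared by the squares over $\{j-1,j\}$ and $\{j,j+1\}$, works out to $s_2=s_1\varepsilon_{j-1}\varepsilon_{j+1}$, which this formula satisfies identically; so all constraints are met simultaneously, $M$ is orientable, and the genus is $1+(n-4)2^{n-3}$ as claimed. With that sign formula written down (or with a citation to Coxeter's identification of $\Z_{\K_n}(D^1,S^0)$, as the paper gives), your argument is complete.
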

\section{Moment-Angle Complex}
\begin{definition}
Let $(X,A)$ be a pair of topological spaces and $\K$ be a finite simplicial complex on a set $[m]=\{1,\cdots,m\}$. For each face $\sigma\in\K$, define $$
(X,A)^\sigma = Y_1\times \cdots \times Y_m
$$
where $$
Y_i = \begin{cases}
X & \mathrm{if }\quad i\in \sigma\\
A & \mathrm{if }\quad i\notin \sigma
\end{cases}
$$
The moment-angle complex $\Z_\K(X,A)$ corresponding to pair $(X,A)$ and simplicial complex $\K$ is defines as the following subspace of the cartesian product $X^m$.
$$
\Z_\K(X,A)=\bigcup_{\sigma\in \K} (X,A)^\sigma
$$
\end{definition}
For our calculation we will use the pair $(D^1,S^0)$. This space, $\Z_\K(D^1,S^0)$ is called the real moment-angle complex corresponding to $\K$.
\begin{example}
Let $\Ll_n$ denote the simplicial complex with $n$ discrete points. Then by the above definition
\begin{gather*}
\Z_{\Ll_n}(D^1,S^0) = (D^1\times S^0\times \cdots \times S^0)\cup (S^0\times D^1\times\\
 \cdots \times S^0)\cup \cdots\cup (S^0\times S^0\times \cdots \times D^1) 
\end{gather*}
It is easy to see that $\Z_{\Ll_n}(D^1,S^0)$ is homeomorphic to of the hypercube graph $Q_n$.
\end{example}
From the definition of the moment-angle complex, we can prove the following lemma.
\begin{lemma}
Let $f:\Ll\hookrightarrow \K$ be an inclusion map of simplicial complex where $\Ll$ and $\K$ both have the same number of vertices. Then there exists an inclusion map of moment-angle complexes, $\Z_f: \Z_{\Ll}(X,A)\hookrightarrow \Z_\K(X,A)$.
\begin{proof}
We can consider $\Ll$ as a subcomplex of $\K$. So any face $\sigma$ of $\Ll$ is also a face of $\K$. From this we can conclude that $$(X,A)^\sigma\subset \bigcup_{\tau\in\K}(X,A)^\tau$$
This implies that $\Z_{\Ll}(X,A)\subset\Z_K(X,A)$.
\end{proof}
\end{lemma}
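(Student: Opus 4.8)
The plan is to realize the asserted map $\Z_f$ as a literal inclusion of subspaces of a common ambient product, so that nothing needs to be constructed by hand beyond a set-theoretic containment. The essential observation is that the hypothesis ``$\Ll$ and $\K$ have the same number of vertices'' lets us identify their vertex sets with a single set $[m]$; because of this, both $\Z_\Ll(X,A)$ and $\Z_\K(X,A)$ are, by definition, subspaces of the very same cartesian product $X^m$, and for every face $\sigma$ the building block $(X,A)^\sigma$ denotes the exact same subspace of $X^m$ whether we read it off from $\Ll$ or from $\K$.

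First I would record that an inclusion $f:\Ll\hookrightarrow\K$ of simplicial complexes on the same vertex set exhibits $\Ll$ as a subcomplex of $\K$, so every face $\sigma\in\Ll$ is also a face of $\K$. Consequently the index set of the union defining $\Z_\Ll(X,A)$ is a subset of the index set of the union defining $\Z_\K(X,A)$. Since a union taken over a smaller collection of the same subspaces is contained in the union taken over the larger collection, we obtain
$$
\Z_\Ll(X,A)=\bigcup_{\sigma\in\Ll}(X,A)^\sigma\subseteq\bigcup_{\tau\in\K}(X,A)^\tau=\Z_\K(X,A)
$$
as subspaces of $X^m$. Declaring $\Z_f$ to be this containment produces the desired inclusion map; its continuity is automatic, since a subspace inclusion is always continuous, and by transitivity of the subspace topology the topology $\Z_\Ll(X,A)$ inherits from $\Z_\K(X,A)$ agrees with the one it inherits directly from $X^m$.

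There is very little that can go wrong, so the one point I would be most careful to make explicit -- the only genuine obstacle, if it can be called that -- is precisely the equal-vertex-set hypothesis. If $\Ll$ and $\K$ had different numbers of vertices, the blocks $(X,A)^\sigma$ would live in cartesian products with different numbers of factors and the naive containment would be meaningless; one would then have to fix an embedding of vertex sets and pad the smaller product with copies of $A$, which is a different and more delicate statement. With the vertex sets identified, however, the entire argument reduces to the elementary monotonicity of unions, and no further work is required.
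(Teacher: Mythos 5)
Your argument is correct and is essentially the same as the paper's: both identify $\Ll$ as a subcomplex of $\K$ on the common vertex set, observe that every face of $\Ll$ is a face of $\K$, and conclude the containment of unions $\bigcup_{\sigma\in\Ll}(X,A)^\sigma\subseteq\bigcup_{\tau\in\K}(X,A)^\tau$ inside the same ambient product $X^m$. Your added remarks on continuity of the subspace inclusion and on why the equal-vertex-set hypothesis is needed are sound elaborations of the same proof, not a different route.
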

\begin{example}
Let $\K_n$ be the boundary of an $n$-gon and $\Ll_n$ be the $n$ vertices of $\K_n$. Using the above lemma, we can conclude that $\Z_{\Ll_n}(D^1,S^0)=Q_n$ is embedded in $\Z_{K_n}(D^1,S^0)$. Also, if we consider the complement of $\Z_{\Ll_n}(D^1,S^0)$ in $\Z_{K_n}(D^1,S^0)$, we will get a collection of open discs $(D^1\times D^1)^\mathrm{o}$ which is straightforward from the definitions. So, this embedding of $Q_n$ in $\Z_{K_n}(D^1,S^0)$ is clearly a 2-cell embedding.
\end{example}
It is interesting to note that $\Z_{K_n}(D^1,S^0)$ is a closed compact surface with genus $1+(n-4)2^{n-3}$. This fact was proved by Coxeter in \cite{Cox}. We will give an inductive proof here.
\begin{proposition}
For $n\geq 3$, $\Z_{K_n}(D^1,S^0)$ is a closed, compact and orientable surface with genus $1+(n-4)2^{n-3}$.
\begin{proof}
For brevity, we write $\mathcal{Z}_{\K_n}$ to denote $\Z_{\K_n}(D^1,S^0)$.\\
If $n = 3$, it is straightforward that $\Z_{\K_n}=\partial(D^1\times D^1\times D^1)\approx S^2$. 
Let's assume the statement is true for an integer $n\geq 3$. So $\mathcal Z_{\K_n}$ is an orientable surface of genus $1+(n-4)2^{n-3}$. Also note that

\begin{align*}
\mathcal Z_{\K_n} = &\ \ \overbrace {D^1\times D^1\times S^0\times \cdots \cdots \cdots\times S^0}^{n\ \mathrm{ factors}} \\
      & \cup S^0\times D^1\times D^1\times S^0\times \cdots \times S^0 \\
      & \ \ \vdots\\
      & \cup S^0\times \cdots \cdots \cdots \times S^0\times   D^1\times D^1 \\
      & \cup D^1\times S^0 \cdots \cdots \cdots \cdots \times   S^0\times D^1 \\
\end{align*}

Let $B$ be the last term in the union that is $ B =  D^1\times S^0 \times \cdots \times   S^0\times D^1 \subset \mathcal{Z}_{\K_n}$. So $B$ is $2^{n-2}$ copies of $D^1\times D^1$ on the surface $\mathcal{Z}_{\K_n}$. Now note that, $$\partial (B) = (S^0\times S^0 \times\cdots \times   S^0\times D^1) \cup (D^1\times S^0 \times\cdots \times   S^0\times S^0) $$ and
$$
\mathcal{Z}_{\K_{n+1}} = ((\mathcal{Z}_{\K_n}-B)\times S^0)\cup (\partial B\times D^1) 
$$

This means that to construct $\mathcal{Z}_{\K_{n+1}}$, we first delete $2^{n-2}$ copies of $D^1\times D^1$ from $\mathcal{Z}_{\K_n}$, then take two copies of $\mathcal{Z}_{\K_n}-B$ and glue $2^{n-2}$ copies of 1-handle along the boundary of $B$. Therefore,
$$
\mathcal{Z}_{\K_{n+1}} = \mathcal{Z}_{\K_n}\#\mathcal{Z}_{\K_n}\#(2^{n-2}-1) S^1\times S^1 
$$

Here one of the $2^{n-2}$ handles is being used to construct the first connected sum $\Z_{\K_n}\#\Z_{\K_n}$ and the remaining $2^{n-2}-1$ copies of 1-handles are connected as $2^{n-2}-1$ copies of torus. So clearly $\mathcal{Z}_{\K_{n+1}}$ is a closed compact orientable surface with genus $$2(1+(n-4)2^{n-3})+2^{n-2}-1= 1+((n+1)-4)2^{(n+1)-3}.$$ 
\end{proof}
\end{proposition}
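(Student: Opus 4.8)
The plan is to avoid induction and instead read the answer directly off the cube. First I would observe that since $\K_n$ has faces only of dimension $\leq 1$ while $n\geq 3$, every face $\sigma$ satisfies $|\sigma|\leq 2\leq n-1$, so in $(D^1,S^0)^\sigma$ at least one coordinate lies in $S^0=\{\pm 1\}$. Hence $\Z_{\K_n}(D^1,S^0)$ is a subcomplex of the boundary of the cube $[-1,1]^n$: concretely, it is the union of exactly those square $2$-faces of the cube whose two free coordinates form a pair $\{i,i+1\}$ consecutive modulo $n$, with the remaining $n-2$ coordinates fixed to $\pm 1$. (Every lower face coming from a singleton $\{i\}$ or from $\emptyset$ is already an edge or a corner of one of these squares.) This identification hands me a cubical cell structure whose cells I can count directly.

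Next I would compute the Euler characteristic from this structure. The $2$-cells are the squares: $n$ consecutive pairs times $2^{n-2}$ sign choices, so $|F|=n2^{n-2}$. The $1$-cells come from the singleton faces $\{i\}$: $n$ choices of $i$ times $2^{n-1}$ signs, so $|E|=n2^{n-1}$. The $0$-cells are the $2^n$ cube vertices (all coordinates $\pm 1$). Therefore $\chi = 2^n - n2^{n-1} + n2^{n-2} = (4-n)2^{n-2}$. Once I know the space is a \emph{closed orientable} surface, the classification of surfaces gives $\chi = 2-2g$, so $g = 1 - (4-n)2^{n-3} = 1 + (n-4)2^{n-3}$, exactly as claimed.

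To justify that it is a closed surface I would check links. Along a $1$-cell (free coordinate $i$, all other signs fixed) the only consecutive pairs containing $i$ are $\{i-1,i\}$ and $\{i,i+1\}$, so each edge lies on precisely two $2$-cells. At a vertex $v$ (all coordinates $\pm 1$) the squares through $v$ are indexed by the $n$ consecutive pairs and the edges through $v$ by the $n$ coordinate directions; the square $\{i,i+1\}$ joins direction $i$ to direction $i+1$, so the link of $v$ is precisely the cycle $\K_n$, i.e. a circle. Thus every point has a disk neighborhood and $\Z_{\K_n}(D^1,S^0)$ is a closed $2$-manifold. (The base case $\Z_{\K_3}(D^1,S^0)\approx S^2$ is visible here as the full boundary of the $3$-cube.)

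The remaining and, I expect, most delicate point is orientability: I must rule out crosscaps, since otherwise $\chi$ only pins down the non-orientable genus. I would try to orient each square $\{i,i+1,\vec\epsilon\}$ by $\pm\,dx_i\wedge dx_{i+1}$ and solve for signs $s(i,\vec\epsilon)\in\{\pm 1\}$ making the two induced boundary orientations opposite along every shared edge. A direct computation of induced boundary orientations shows the naive constant choice $+dx_i\wedge dx_{i+1}$ fails exactly when the fixed signs $\epsilon_{i-1},\epsilon_{i+1}$ on a shared edge disagree, so a genuine sign depending on the fixed coordinates is required; the real work is producing such an assignment coherently around the cyclic index set $\mathbb{Z}/n$ (equivalently, exhibiting a $2$-cycle that generates $H_2(\cdot;\mathbb{Z})$), with the wrap-around pair $\{n,1\}$ being the fiddly case. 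This bookkeeping is where I anticipate the main obstacle. A clean alternative that sidesteps it is the surgery route: build $\Z_{\K_{n+1}}$ from two copies of $\Z_{\K_n}$ by deleting the squares over a single edge and regluing handles along their boundary, so that orientability — and indeed the genus via connected sums of tori — propagates from $\Z_{\K_3}\approx S^2$; either route completes the proof.
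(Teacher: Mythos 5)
Your route is genuinely different from the paper's. You realize $\Z_{\K_n}(D^1,S^0)$ as the union of those $2$-faces of $[-1,1]^n$ whose free coordinates are cyclically consecutive, count cells to get $\chi=2^n-n2^{n-1}+n2^{n-2}=(4-n)2^{n-2}$, and verify the closed-surface property by checking that each edge lies on exactly two squares and that each vertex link is the cycle $\K_n$; all of this is correct (you should also record that the surface is connected, since its $1$-skeleton is $Q_n$, before invoking $\chi=2-2g$). The paper instead argues by induction on $n$: it removes from $\Z_{\K_n}$ the $2^{n-2}$ squares $B$ lying over the edge $\{n,1\}$, takes two copies of $\Z_{\K_n}-B$, and glues $2^{n-2}$ handles along $\partial B$ to obtain $\Z_{\K_{n+1}}=\Z_{\K_n}\#\Z_{\K_n}\#(2^{n-2}-1)(S^1\times S^1)$, whence the genus recursion $g_{n+1}=2g_n+2^{n-2}-1$. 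The trade-off is clear: the induction inherits orientability from the base case $S^2$ but must justify the surgery description, while your computation gets $\chi$ and the manifold property almost for free but must prove orientability separately.

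That last point is where your proof is genuinely incomplete. The Euler characteristic alone does not pin down the genus: a non-orientable closed surface with the same $\chi$ has non-orientable genus $2-\chi$, and the stated conclusion would fail. You correctly diagnose that the constant orientation $dx_i\wedge dx_{i+1}$ is incompatible across an edge exactly when $\epsilon_{i-1}\neq\epsilon_{i+1}$, but you then stop: no sign assignment is produced, no argument is given that the required signs can be chosen consistently around the cyclic index set, and deferring to the surgery construction is deferring to an entire second proof (namely the paper's) that you have not carried out. The gap is closable, and cheaply: orient the square with free coordinates $\{i,i+1\}$ and fixed signs $\vec\epsilon$ by $s(i,\vec\epsilon)\,dx_i\wedge dx_{i+1}$ with $s(i,\vec\epsilon)=\prod_{j\neq i,i+1}\epsilon_j$. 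Along the edge with free coordinate $i$, the two incident squares induce the orientations $s(i-1,\cdot)\,\epsilon_{i-1}\,dx_i$ and $-s(i,\cdot)\,\epsilon_{i+1}\,dx_i$, and compatibility reduces to $s(i-1,\cdot)\,s(i,\cdot)=\epsilon_{i-1}\epsilon_{i+1}$, which this choice satisfies identically since every $\epsilon_j$ with $j\notin\{i-1,i,i+1\}$ appears twice in the product. With that one check added, your argument is complete and stands as a clean, non-inductive alternative to the paper's proof.
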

In the above discussion, we have proved that the hypercube graph $Q_n$ can be embedded in the real moment-angle complex $\Z_{\K_n}(D^1,S^0)$ which is a surface of genus $1+(n-4)2^{n-3}$. Hence Theorem \ref{main} is proved.

\section{Action of \texorpdfstring{$C_n$}{Cn} on \texorpdfstring{$Q_n$}{Qn} and \texorpdfstring{$\Z_{\K_n}(D^1,S^0)$}{Zk}}

Let $C_n$ denote the cyclic group with $n$ elements generated by $\sigma$. Since $\K_n$ can be considered as the boundary of a regular $n$-gon, we can define an action of $C_n$ on $\K_n$ by rotating the $n$-gon by $2\pi/n$ radians about the centre. If $(i,i+1)$ represents an edge, then this action will take this edge to $(i+1,i+2)$ (here the vertices are considered as $i\ (\mod n)$). So We can define an action of $C_n$ on $\Z_{\K_n}(D^1,S^0)$ by
$\sigma (x_1,\cdots, x_n) = (x_{\sigma(1)},\cdots,x_{\sigma(n)})=(x_2, \cdots, x_n,x_1)$ where $(x_1,\cdots, x_n) \in  (D^1,S^0)^\tau$ for a maximal face $\tau\in \K_n$. So $\sigma$ is rotating the coordinates of a point in the moment-angle complex $\Z_{\K_n}(D^1,S^0)$. We can define a similar action of $C_n$ on the hypercube graph $Q_n$ by rotating the coordinates of a point. It is straightforward to note that the following diagram commutes.
$$
\begin{tikzcd}
Q_n \arrow{d} \arrow[r, hook]
& \M \arrow{d} \\
Q_n/C_n \arrow[r,hook]
& \M/C_n
\end{tikzcd}
$$
Therefore the quotient graph $Q_n/C_n$ is embedded in the quotient space $\M/C_n$. We will show that the quotient space \\ $\Z_{\K_n}(D^1,S^0)/C_n$ is also a closed connected orientable surface. Therefore, calculating the genus of the surface $\Z_{\K_n}(D^1,S^0)/C_n$ would suffice to give an upper bound for the genus of the quotient graph $Q_n/C_n$. First we will prove that $\Z_{\K_n}/C_n$ is closed connected, compact and orientable manifold. Then we will calculate the genus of this surface. Indeed the following theorem can be found in Ali's thesis \cite{Ali} theorem 4.2.2.

\begin{theorem}
Let $C_m$ be the subgroup of $C_n$ i.e. $m|n$, then $\Z_{\K_n}(D^1,S^0)/C_m$ is a closed surface.
\end{theorem}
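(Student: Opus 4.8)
The plan is to show that $\M/C_m$ is a compact, connected $2$-manifold without boundary. Since $\M$ is a closed surface (by the Proposition above) and $C_m$ is a finite group acting by homeomorphisms, the quotient is automatically compact (continuous image of a compact space), connected (image of the connected space $\M$, which is connected for $n\ge 3$), and Hausdorff with an open quotient map; these are standard facts about quotients by finite group actions. Hence the whole problem reduces to verifying that every point of $\M/C_m$ has a neighbourhood homeomorphic to an open disc, and this is where the geometry of the action enters.

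First I would locate the points with nontrivial isotropy. The subgroup $C_m\subseteq C_n$ is generated by the shift $\sigma^{k}$ with $k=n/m$, and a nontrivial element $\sigma^{jk}$ (for $1\le j\le m-1$) translates the coordinate indices by $jk\not\equiv 0 \pmod{n}$. If a point $x$ lies in the interior of a cell of $\M$, then any element fixing $x$ must stabilise that cell setwise; but a square corresponds to a pair $\{i,i+1\}$ of ``free'' coordinates (those lying in $D^1$) and an edge to a single free coordinate $\{i\}$, and a nonzero cyclic translation can map neither $\{i,i+1\}$ nor $\{i\}$ to itself. Consequently only the vertices (all coordinates in $S^0$) can have nontrivial isotropy, and $\sigma^{jk}$ fixes the vertex $(\epsilon_1,\dots,\epsilon_n)$ precisely when its sign pattern is invariant under translation by $jk$. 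Thus the fixed-point set is a finite subset of the $2^{n}$ vertices, and $C_m$ acts freely everywhere else; at a point with trivial isotropy the quotient map is a local homeomorphism, so such points are already manifold points of $\M/C_m$.

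The heart of the argument is the local model at a fixed vertex $v$, and this is the step I expect to be the main obstacle. Because $\M$ is a closed surface, the star of $v$ --- the union of the $n$ squares having $v$ as a corner, one for each edge $(i,i+1)$ of the $n$-gon --- is a disc neighbourhood $N_v$, and these squares are arranged in a single cycle $(1,2)-(2,3)-\cdots-(n,1)$ around $v$, consecutive squares meeting along the hypercube edge through $v$ whose free coordinate they share. The isotropy group $H_v\subseteq C_m$ is cyclic; using the convention of the paper, $\sigma^{jk}$ sends the square with free coordinates $\{i,i+1\}$ to the square with free coordinates $\{i-jk,\,i-jk+1\}$, so it merely rotates the cyclic arrangement of squares by the fixed offset $-jk$ and therefore preserves their cyclic order. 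Hence $H_v$ induces an orientation-preserving cyclic rotation of the link circle of $v$, so by the classification of finite-order homeomorphisms of the disc it acts on $N_v$ as a rotation (up to topological conjugacy) rather than as a reflection, and the quotient $N_v/H_v$ is a cone on a circle, i.e.\ again an open disc. For $n=4$, for instance, $\M$ is a torus and $\sigma^{2}$ is the elliptic involution with four fixed vertices whose quotient is the ``pillowcase'' $S^2$, which is consistent with the claim.

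The delicate point to pin down rigorously is exactly that the local action is orientation-preserving, so that no boundary is created at the images of the fixed vertices. The clean route is the cyclic-order observation just given; alternatively one can fix the orientation of $\M$ by ordering the two free coordinates of each square as $(dx_i,dx_{i+1})$ and check that every coordinate shift carries this frame to the corresponding frame of the image square, which would show that $C_m$ acts by orientation-preserving homeomorphisms and hence that each isotropy action is a rotation. Combining the free case with this local model, every point of $\M/C_m$ has a disc neighbourhood, so $\M/C_m$ is a closed surface, as claimed; the same orientation computation in fact shows the quotient is orientable, although only the closed-surface statement is needed here.
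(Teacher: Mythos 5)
Your argument is correct, but there is an important mismatch of expectations to flag first: the paper does not prove this theorem at all --- it is imported verbatim from Ali's thesis (Theorem 4.2.2) and used as a black box, so there is no in-paper proof to compare against step by step. Your proposal therefore supplies an argument the paper outsources, and it is a sound one: the reduction to a local disc criterion, the observation that a nonzero cyclic shift cannot preserve the set of ``free'' coordinates $\{i,i+1\}$ or $\{i\}$ of a cell (for $n\geq 3$; note a shift by $1$ would swap the two indices of a square when $n=2$, so the standing hypothesis $n\geq 3$ is doing work here), hence the action is free away from the vertices; and at a fixed vertex $v$ the identification of the star with the cone on a link circle built from the $n$ squares indexed by edges of the $n$-gon, on which $\sigma^{t}$ acts by rotating the cyclic arrangement, so the quotient of the star is again a cone on a circle, i.e.\ a disc. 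The closest material actually present in the paper is the subsequent orientability lemma, which triangulates the squares and checks compatibility of orientations by an explicit case analysis of edge identifications for $n=3$, $n=4$ and $n\geq 5$ (and only for the full group $C_n$); your link-rotation argument is more uniform, works for every divisor $m$ of $n$ at once, and yields orientability of the quotient as a byproduct. One small point worth making explicit in a final write-up: two non-consecutive squares at $v$ meet only in $v$ itself, which is what guarantees that the star is an embedded disc (the cone on an $n$-cycle) rather than a more degenerate identification space; with that recorded, the local model and hence the theorem are complete.
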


It is not surprising that the quotient surface $\Z_{\K}(D^1,S^0)/C_n$ must be an orientable surface. We can prove it by giving a $\delta$-complex structure on this surface and check that all the triangles on the surface can be given an orientation such that any two neighboring triangles' edges fit nicely.

\begin{lemma}
The surface $\Z_{\K_n}(D^1,S^0)/C_n$ is an orientable surface.
\begin{proof}
First note that the action of $C_n$ permutes the coordinate of a point in a cyclic manner. So we only need to consider the space $D^1\times D^1\times S^0\times \dots \times S^0$, which is actually $2^{n-2}$ copies of $D^1\times D^1$. For each of these squares, we draw a diagonal from the lower left corner to the top right, make a triangulation of the surface and give suitable orientation to each triangle. Then we glue these squares along their boundaries under the identification generated by $C_n$. Let $\epsilon_1\epsilon_2\dots \epsilon_n$ represent the coordinate $(\epsilon_1,\dots, \epsilon_n )$ where $\epsilon_i = 0 \text{ or } 1$. For abbreviation, we write directed edges as $(000, 010)$ which represent the directed edge from $(0,0,0)$ to $(0,1,0)$.

\textbf{Case 1:}($n=3$).
We have two copies of $D^1\times D^1$ (Figure \ref{fig:my_label5}a). Under the action of $C_n$, we have the following identification of edges on the boundary of this two squares.
$$
(000,010) \sim (000,100),\quad (001,011) \sim (010,110),
$$
$$
(100,110) \sim (001,101),\quad (101,111)  \sim  (011,111) 
$$
As shown in the figure \ref{fig:my_label5}, we can give orientation to to each of the triangles such that the orientation of each edge fits together. 
\begin{figure}
    \centering
    \includegraphics[scale = 0.7]{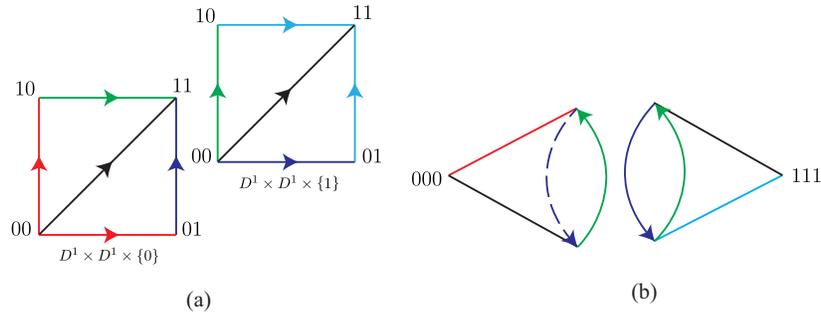}
    \caption{Quotient space $\Z_{\K_3}(D^1,S^0)/C_3$.}
    \label{fig:my_label5}
\end{figure}

\textbf{Case 2:}($n=4$). In this case, we have four copies of $D^1\times D^1$ as shown in (Figure \ref{fig:my_label6}b). And we have the identification of edges as follows:
$$
(0000,0100) \sim (0000,1000),\quad (0010,0110) \sim (0100,1100) ,
$$
$$
(1000,1100) \sim (0001,1001),\quad (1011,1111) \sim (0111,1111) 
$$
$$
(1001,1101) \sim (0011,1011),\quad (0011,0111) \sim (0110,1110),
$$
$$
(1010,1110) \sim (0101,1101),\quad (0001,0101) \sim (0010,1010) 
$$
We can see from the figure that the orientation of each triangle is compatible to each other.
\begin{figure}
    \centering
    \includegraphics[scale = 0.8]{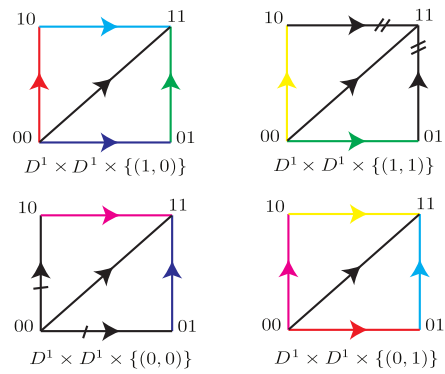}
    \caption{Quotient space $\Z_{\K_4}(D^1,S^0)/C_4$.}
    \label{fig:my_label6}
\end{figure}

\textbf{Case 3:} ($n\geq 5$). For $n\geq 5$, we can have the identification of edges as follows:
$$
 (0000x,1000x) \sim (000x0,010x0),\quad (0010x,0110x) \sim (010x0,110x0),
$$
$$
(100x0,110x0) \sim (00x01,10x01),\quad (101x1,111x1) \sim (01x11,1x111) 
$$
$$
(1001x,1101x) \sim (001x1,101x1),\quad (001x1,011x1) \sim (01x10,11x10),
$$
$$
(1010x,1110x) \sim (010x1,110x1),\quad (000x1,010x1) \sim (00x10,10x10)
$$
Here, $x$ represents a string of length $(n-4)$ whose characters can be $0$ or $1$. So we can see that all this identification preserve the orientation of the surface. 
Therefore, $\Z_\K(D^1,S^0)/C_n$ is an orientable surface.
\end{proof}
\end{lemma}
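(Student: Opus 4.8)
The plan is to upgrade the already-established fact that $\M/C_n$ is a closed surface to orientability by producing a coherent orientation on an explicit triangulation. Recall that a closed surface is orientable exactly when it admits a $\Delta$-complex structure in which each $2$-simplex can be oriented so that every $1$-simplex inherits opposite induced orientations from the two triangles meeting along it. Thus the entire problem reduces to building such a triangulation of the quotient and verifying this local compatibility along every edge.

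First I would fix a fundamental domain. Since $\sigma$ cyclically permutes the coordinates, it carries the edge block $D^1\times D^1\times S^0\times\cdots\times S^0$ indexed by the edge $\{1,2\}$ to the block indexed by an adjacent edge, and the $n$ edge blocks are permuted in a single $n$-cycle. A quick count supports the picture: $\M$ has $n2^{n-2}$ squares, and no nontrivial power of $\sigma$ returns a block to itself, so the $C_n$-action is free on the open $2$-cells. Consequently a single edge block—namely $2^{n-2}$ copies of the square $D^1\times D^1$—maps bijectively onto the open cells of the quotient and serves as a fundamental domain; all the gluing in $\M/C_n$ then takes place on the boundaries of these squares, via the identifications induced by the powers of $\sigma$.

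Next I would triangulate each of the $2^{n-2}$ squares by its lower-left-to-upper-right diagonal, orient every square by the standard orientation of $\R^2$ (which orients each triangle and hence each of its directed edges), and note that within a square the two triangles automatically agree along the diagonal. The only compatibilities left to check are those along the outer edges of the squares, which are precisely the edges identified by $C_n$. The crux is therefore to record, for each glued pair of boundary edges, the identification produced by the relevant power of $\sigma$—written as $(\epsilon_1\cdots\epsilon_n,\,\epsilon'_1\cdots\epsilon'_n)\sim(\delta_1\cdots\delta_n,\,\delta'_1\cdots\delta'_n)$ in the directed-edge notation—and to confirm that the two triangles meeting along the glued edge induce opposite orientations on it. Once every boundary identification passes this test, the local orientations patch to a global one and $\M/C_n$ is orientable.

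The main obstacle is exactly this bookkeeping: following how the shift $(x_1,\dots,x_n)\mapsto(x_2,\dots,x_n,x_1)$ acts on the directed boundary edges and checking orientation-reversal uniformly. I expect the efficient organization is to dispatch the small cases $n=3$ and $n=4$ by writing out their identifications in full, and then to treat all $n\geq 5$ at once, where the $n-4$ interior coordinates are simply transported by $\sigma$ (the string $x$) and are irrelevant to the orientation check, so that a single list of identifications settles the generic range. A more conceptual alternative would be to note that $\M$ is itself orientable and that $\sigma$ acts by an orientation-preserving homeomorphism; as the quotient map is then an orientation-preserving branched covering with finite branch locus onto the manifold $\M/C_n$, the orientation defined off the branch points extends across them. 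But verifying that $\sigma$ preserves orientation reduces to the same sign computation, so I would carry out the explicit edge check regardless.
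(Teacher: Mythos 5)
Your proposal matches the paper's argument essentially step for step: reduce to the fundamental domain of $2^{n-2}$ squares $D^1\times D^1\times S^0\times\cdots\times S^0$, triangulate each square by its lower-left-to-upper-right diagonal, orient the triangles, and verify that the boundary-edge identifications induced by the cyclic shift are orientation-compatible, splitting into the cases $n=3$, $n=4$, and $n\geq 5$ with the generic string $x$ carrying the extra coordinates. The paper carries out the same bookkeeping via explicit identification lists and figures, so your plan is a faithful blueprint of its proof.
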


Since the quotient of a compact and connected space is also a compact and connected space, we have proved the following theorem.
\begin{theorem}
\label{theorem_quotient}
Let $\K$ be the boundary of an $n$-gon. Then $\Z_\K(D^1,S^0)/C_n$ is a closed, compact and orientable surface.
\end{theorem}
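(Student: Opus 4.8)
The plan is to obtain the theorem by assembling three separate properties of $\Z_\K(D^1,S^0)/C_n$ — being a closed $2$-manifold, orientability, and compactness together with connectedness — each of which is either already established in the excerpt or essentially immediate. Since the genuine work (the orientation bookkeeping) has been carried out in the preceding lemma and the manifold structure is supplied by a cited result, the theorem itself is a packaging corollary, and the honest proof is to collect these pieces.

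First I would establish that the quotient is a closed surface, which is the only delicate point. In general the quotient of a surface by a finite group action need \emph{not} be a manifold: a fixed point of the action typically produces a cone-type (orbifold) singularity. Here, however, $\Z_\K(D^1,S^0)$ is built only from the boundary of the $n$-gon, i.e.\ from its vertices and edges, so the centre of the polygon — the fixed point of the rotation by $2\pi/n$ — never enters the complex, and the induced $C_n$-action on the moment-angle complex has the right local behaviour. I would therefore invoke the theorem cited from Ali's thesis with $m=n$, which asserts precisely that $\Z_{\K_n}(D^1,S^0)/C_n$ is a closed surface.

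Second, orientability follows directly from the lemma just proved: the explicit $\delta$-complex structure, together with the compatible orientations of the triangles recorded through the edge identifications in the cases $n=3$, $n=4$, and $n\geq 5$, exhibits a coherent orientation on the quotient. Third, compactness and connectedness are inherited from the domain. By the preceding Proposition, $\Z_\K(D^1,S^0)$ is a closed — hence compact — connected surface of genus $1+(n-4)2^{n-3}$, and the quotient map $\Z_\K(D^1,S^0)\to\Z_\K(D^1,S^0)/C_n$ is a continuous surjection; thus the quotient is the continuous image of a compact connected space and is itself compact and connected.

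Combining these three facts yields that $\Z_\K(D^1,S^0)/C_n$ is a closed, compact, connected, orientable surface, which is the assertion of the theorem. The main obstacle, were one to argue from scratch rather than cite Ali's result, is exactly the verification that the quotient is locally Euclidean at every point — that the $C_n$-action introduces no orbifold singularities — and this is what the restriction to the boundary complex $\K_n$, avoiding the centre of the polygon, guarantees.
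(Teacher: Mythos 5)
Your proof follows the paper's argument exactly: the closed-surface property is delegated to the cited theorem from Ali's thesis (with $m=n$), orientability to the preceding lemma's $\delta$-complex construction, and compactness and connectedness to the fact that a continuous image of a compact connected space is compact and connected. One caveat on your aside: the claim that the action behaves well because the polygon's centre is absent from $\K$ is misleading --- the coordinate-permutation action on $\Z_\K(D^1,S^0)$ does have fixed points (e.g.\ $(1,\dots,1)$), which is exactly why the paper later treats the quotient map as a \emph{branched} covering; the quotient is nonetheless locally Euclidean because a finite cyclic group acting by rotation on a disk has a disk as quotient, and in any case this is what the cited result supplies.
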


\subsection{Branched covering and genus of \texorpdfstring{$\Z_\K(D^1,S^0)/C_n$}{ZK}}

Next, we will prove the following lemma which gives a formula for finding the genus of the quotient space.

\begin{lemma}
\label{my_lemma1}
The genus of $\Z_\K(D^1,S^0)/C_n$ is given by the following formula:
\begin{equation}
g(\Z_\K(D^1,S^0)/C_n) = 1 + 2^{n-3} - \frac{1}{2n}\sum_{d|n} \phi(d)2^{n/d} 
\end{equation}
\end{lemma}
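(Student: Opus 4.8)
The plan is to realize the quotient map
$\pi\colon \M \to \M/C_n$
as a branched covering and apply the Riemann--Hurwitz formula. By Theorem~\ref{theorem_quotient} the target is a closed orientable surface, and since $C_n$ has $n$ elements, $\pi$ is a degree-$n$ branched cover of closed orientable surfaces, so
\[
\chi(\M) = n\,\chi(\M/C_n) - \sum_{p}(e_p - 1),
\]
where $p$ ranges over the ramification points and $e_p = |(C_n)_p|$ is the order of the stabilizer of $p$. The left-hand side is already available: from the genus computed in the Proposition, $\chi(\M) = 2 - 2\bigl(1+(n-4)2^{n-3}\bigr) = (4-n)2^{n-2}$.

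The first step is to locate the branch locus. I would use the natural CW structure of the moment-angle complex, with $2^n$ vertices in $(S^0)^n$, the $n2^{n-1}$ intervals coming from the vertex-faces $\{i\}$ of $\K_n$, and the $n2^{n-2}$ squares $D^1\times D^1$ coming from the edge-faces $\{i,i+1\}$. For $n\geq 3$ a nontrivial rotation $\sigma^k$ sends the underlying face $\{i\}$ or $\{i,i+1\}$ to a \emph{different} face of $\K_n$, hence moves the corresponding open cell off itself; therefore no interior point of a $1$- or $2$-cell is fixed, and the only points with nontrivial stabilizer are vertices. Near such a fixed vertex $v$ the stabilizer acts as a rotation of order $|(C_n)_v|$, so the local model is $z\mapsto z^{|(C_n)_v|}$ and $e_v = |(C_n)_v|$ is the correct ramification index.

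The second step is to evaluate the ramification sum. A vertex $v=(\epsilon_1,\dots,\epsilon_n)$ is fixed by $\sigma^k$ exactly when the binary string $\epsilon$ is invariant under the cyclic shift by $k$, which gives $\lvert\mathrm{Fix}(\sigma^k)\rvert = 2^{\gcd(k,n)}$. Using the double count $\sum_{v}\lvert(C_n)_v\rvert = \sum_{k=0}^{n-1}\lvert\mathrm{Fix}(\sigma^k)\rvert$ and grouping the indices $k$ by the value of $\gcd(k,n)$ (there being $\phi(n/d)$ of them with $\gcd=d$), I would obtain
\[
\sum_{p}(e_p - 1) = \sum_{v}\lvert(C_n)_v\rvert - 2^n = \sum_{k=0}^{n-1}2^{\gcd(k,n)} - 2^n = \sum_{d\mid n}\phi(d)\,2^{n/d} - 2^n .
\]
Substituting this together with $\chi(\M)=(4-n)2^{n-2}$ into the Riemann--Hurwitz equation and solving for $\chi(\M/C_n)$ yields $\chi(\M/C_n) = \tfrac1n\sum_{d\mid n}\phi(d)2^{n/d} - 2^{n-2}$; setting $\chi = 2-2g$ and isolating $g$ then produces the claimed formula.

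The step I expect to be the main obstacle is the first one: rigorously certifying that the branch locus is \emph{precisely} the set of fixed vertices — that is, that there is no hidden ramification along edges or inside the squares — and that the local action at each fixed vertex is genuinely a rotation of order $\lvert(C_n)_v\rvert$, so that the assignment $e_v=\lvert(C_n)_v\rvert$ is legitimate. Once the branch data is pinned down, the remainder is routine Burnside-style counting. As an independent consistency check, one can bypass Riemann--Hurwitz and count the cells of the quotient directly: the action is free on $1$- and $2$-cells for $n\geq 3$, giving $2^{n-1}$ edge-orbits and $2^{n-2}$ square-orbits, while the vertex-orbits are the binary necklaces of length $n$, numbering $\tfrac1n\sum_{d\mid n}\phi(d)2^{n/d}$; the alternating sum of these reproduces the same $\chi(\M/C_n)$.
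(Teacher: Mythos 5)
Your proposal follows essentially the same route as the paper: both realize the quotient map as a degree-$n$ branched cover, identify the branch locus with the fixed vertices in $(S^0)^n$, and apply the Riemann--Hurwitz formula with $\chi(\M)=(4-n)2^{n-2}$. The only cosmetic difference is that you evaluate the ramification sum by Burnside-type counting of $\sum_{k}2^{\gcd(k,n)}$, whereas the paper groups the vertex orbits into primitive necklaces and invokes Moreau's formula; both yield $\sum_{d\mid n}\phi(d)2^{n/d}$ and hence the same genus.
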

where $\phi$ is the Euler totient function.

To prove this lemma, we will use the Riemann-Hurwitz formula for branched covering. Note that the quotient map $\Z_\K(D^1,S^0) \to \Z_\K(D^1,S^0)/C_n$ would be a covering map if remove a finite number of points (the corners of each $D^1\times D^1$). So this quotient map is a branched cover. We use the following definition from \cite{Turner}.

\begin{definition}
Let $X$ and $Y$ be two surfaces. A map $p: X\to Y$ is called a \textit{branched covering} if there exists a codimension 2 subset $S\subset Y$ such that $p: X\setminus p^{-1}(S) \to Y\setminus S$ is a covering map. The set S is called the branch set and the preimage $p^{-1}(S)$ is called the singular set.
\end{definition}
\begin{definition}
Let $p: X\to Y$ be a branched covering of two surfaces where $Y$ is connected. The degree of this branched covering is the number of sheets of the induced covering after removing the branch points and singular points. 
\end{definition}
In \cite{Ali}, it is proved that $\Z_\K(D^1,S^0) \to \Z_\K(D^1,S^0)/C_n$ is a branched covering of degree $n$. Since the quotient is closed, compact and orientable we can apply the classical Riemann-Hurwitz formula for branched covering. 

\begin{theorem}[Riemann-Hurwitz Formula] Let $G$ be a finite group acting on the surface $X$, such that the map $p:X \to X/G $ be a branched covering with a branch subset $S\subset X/G$. Let $G_y$ represent the isotropy subgroup for a point $y\in X$ and $\chi(X)$ be the Euler characteristic of $X$. Then 
\begin{equation}
\chi (X) = |G|\cdot \chi(X/G) -  \sum_{x\in S} \left(|G|-\frac{|G|}{n_x}\right)
\end{equation}
with $n_x = |G_y|$ for $y\in p^{-1}(x)$ and $x\in S$. 

\end{theorem}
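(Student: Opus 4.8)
The plan is to prove the formula by comparing the Euler characteristics of $X$ and of the quotient $Y = X/G$ through a single, carefully chosen cell decomposition. First I would fix a triangulation (or CW structure) $T$ of the closed surface $Y$ with the property that every branch point $x\in S$ occurs as a vertex of $T$ and no point of $S$ lies in the interior of an edge or face. This is possible because $S$, being codimension $2$ in a surface, is a finite discrete set, so any triangulation may be refined to absorb the points of $S$ into its $0$-skeleton. Writing $V, E, F$ for the numbers of $0$-, $1$-, and $2$-cells of $T$, Euler's formula gives $\chi(Y) = V - E + F$. I would also note that enlarging $S$ harmlessly: for a point $x$ with trivial stabilizer one has $n_x = 1$ and $|G| - |G|/n_x = 0$, so it is immaterial whether such a point is counted in the sum, and all points of a single orbit have conjugate stabilizers, so $n_x = |G_y|$ is well defined.

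Next I would pull $T$ back along $p$ to produce a cell structure $\widetilde{T}$ on $X$. By the definition of a branched covering, $p\colon X\setminus p^{-1}(S)\to Y\setminus S$ is a genuine covering map, and since $X\to X/G$ is the quotient by the $G$-action its degree is $|G|$. Because the interior of each edge and of each face of $T$ lies in $Y\setminus S$, that open cell is evenly covered and therefore lifts to exactly $|G|$ disjoint cells of the same dimension in $X$; hence $\widetilde{T}$ has $|G|\cdot E$ edges and $|G|\cdot F$ faces. Vertices behave differently only over $S$: for $x\notin S$ the fiber $p^{-1}(x)$ has $|G|$ points, while for $x\in S$ the fiber is a single $G$-orbit, so the orbit--stabilizer theorem gives $|p^{-1}(x)| = |G|/|G_y| = |G|/n_x$. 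Separating the $V$ vertices of $T$ into the branch vertices in $S$ and the regular ones, the vertex count of $\widetilde{T}$ is
\[
\widetilde{V} = |G|\,V - \sum_{x\in S}\left(|G| - \frac{|G|}{n_x}\right),
\]
since each branch vertex contributes $|G|/n_x$ lifts in place of the full $|G|$.

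Finally I would assemble $\chi(X) = \widetilde{V} - |G|\,E + |G|\,F$ and factor out $|G|$:
\[
\chi(X) = |G|\,V - \sum_{x\in S}\left(|G| - \frac{|G|}{n_x}\right) - |G|\,E + |G|\,F = |G|\bigl(V - E + F\bigr) - \sum_{x\in S}\left(|G| - \frac{|G|}{n_x}\right),
\]
which is exactly $|G|\cdot\chi(X/G) - \sum_{x\in S}\bigl(|G| - |G|/n_x\bigr)$ once $V - E + F$ is recognized as $\chi(Y)$. The hard part will be justifying that $\widetilde{T}$ is a bona fide cell decomposition of $X$ with precisely these cell counts, and this reduces to understanding the local model of $p$ near the singular set $p^{-1}(S)$. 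For a finite group acting on a surface the stabilizer $G_y$ of a fixed point is cyclic and acts by rotations, so in suitable local coordinates $p$ takes the form $z\mapsto z^{n_x}$; one must check that $T$ can be chosen fine enough near each branch point that the lifts of the incident edges and faces close up correctly along the fiber $p^{-1}(x)$, giving genuine closed cells. Once this local picture is secured, the global counting and the algebraic simplification above are routine.
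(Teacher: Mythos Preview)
Your argument is the standard and correct proof of the Riemann--Hurwitz formula: choose a cell structure on the quotient with the branch locus contained in the $0$-skeleton, lift it through the branched cover, count cells using the orbit--stabilizer theorem over branch vertices, and collect terms. The one place that needs care, as you already flag, is verifying that the pullback is an honest CW structure near the singular fibers; the local model $z\mapsto z^{n_x}$ (which holds because a finite group acting effectively and orientation-preservingly on a disk fixing the center must act through a cyclic rotation group) handles this, and your sketch indicates you know how to fill it in.

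There is nothing to compare against in the paper: the Riemann--Hurwitz formula is quoted there as a classical result and applied, but no proof is given. So your write-up supplies something the paper omits rather than paralleling or diverging from an existing argument. If anything, you might add a one-line remark that the formula is being stated in the orientable case (both $X$ and $X/G$ are orientable surfaces in the paper's application), since in the nonorientable setting stabilizers can contain reflections and the local model is slightly different.
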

 
 To apply this formula we need to calculate the cardinality of the isotropy subgroup for each of the singular points in $\Z_K(D^1,S^0)$. It is straightforward that the action of $C_n$ on a point permute its coordinate in a cyclic manner. The only points in $\Z_K(D^1,S^0)$ which have a nontrivial isotropy group have coordinates 0's or 1's only. Therefore, the cardinality of the isotropy subgroup is related to the number of aperiodic necklaces with 2-coloring. We will give some necessary definitions related to aperiodic necklaces and then count the Euler characteristic of $\Z_K(D^1,S^0)/C_n$ by using the Riemann-Hurwitz formula.

 \begin{definition}
Let $W$ represent a word of length $n$ over an alphabet of size $k$. We define an action of the cyclic group $C_n=\langle \sigma \rangle$  on W by rotating its characters. For example, if $W = a_1a_2\cdots a_n$ where each $a_i$ is a character from the alphabet, then $\sigma(W) = a_2a_3\cdots a_n a_1$.
 A word $W$ of length $n$ is called an \textit{aperiodic word} if $W$ has $n$ distinct rotation. 
 \end{definition} 

\begin{definition}
An equivalence class of an aperiodic word under rotation is called a \textit{primitive necklace}.
\end{definition} 
The total number of primitive $n$-necklaces on an alphabet of size k, denoted by $M(k,n)$, is given by Moreau's formula \cite{moreau1872permutations},
$$
M(k,n) = \frac{1}{n} \sum_{d|n} \mu (d) k^{n/d}
$$
Note that we can deduce Moreau's formula by using M{\"o}bius inversion formula and the fact that 
$$
k^n = \sum_{d|n} d  M(k,d)
$$

\textbf{Total Number of Necklace of length $n$ with $k$-coloring}: Note that this number is the same as $\sum_{d|n} M(k,d)$ since $M(k,d)$ gives us the number of aperiodic necklaces for each divisor $d$ of $n$. So, we have
$$
\begin{aligned}
\sum_{d|n} M(k,d) &= \sum_{d|n} \frac{1}{d} \sum_{c|d} \mu (c) k^{d/c} \\
&=\frac{1}{n} \sum_{d|n} \sum_{c|d} \frac{n}{d} \mu (d/c) k^{c} \\
&= \frac{1}{n} \sum_{c|n} \sum_{\substack{b|\frac{n}{c}\\ d = bc}} \frac{n}{d} \mu (d/c) k^{c}  \\
&= \frac{1}{n}\sum_{c|n} k^c \sum_{b|\frac{n}{c}}\mu(b)\left(\frac{n}{c}\right) \left(\frac{1}{b}\right)\\
&= \frac{1}{n}\sum_{d|n} \phi(d) k^{n/d}\\
\end{aligned}
$$
The last line follows since $\sum_{b|\frac{n}{c}}\mu(b)(\frac{n}{c}) (\frac{1}{b}) = \phi(n/c)$, where $\phi$ is the Euler's totient function. 

For our calculation, $k=2$ since we are only concerned about words with 2 letters or necklaces with 2-coloring. We will denote this Moreau's formula by 
$$
M(n) = \frac{1}{n} \sum_{d|n} \mu (d) 2^{n/d}
$$
So we have, $\sum_{d|n} M(d)= \frac{1}{n}\sum_{d|n} \phi(d) 2^{n/d}$.
\begin{proof}[\textbf{Proof of lemma \ref{my_lemma1}}]

Note that $C_n$ acts on a point of $\Z_\K(D^1,S^0)$ by cyclically permuting the coordinates. So $C_n$ acts freely on all but finitely many points. The coordinate of those points can be only $0$ or $1$. Each point in the branch set can be considered a primitive necklace of length $d$ where $d|n$. Clearly, there are $M(d)$ points in the branch set which has isotropy group $C_{n/d}$. So the summation in the Riemann-Hurwitz formula becomes $$\sum_{x\in S} \left(|G|-\frac{|G|}{n_x}\right) = \sum_{d|n} M(d) (n- \frac{n}{n/d})$$ 

Now using the Riemann-Hurwitz formula,

$$
\begin{aligned}
&\chi(\Z_\K(D^1,S^0)) = n .\chi(X/G) -  \sum_{d|n} M(d) (n- \frac{n}{n/d})\\
&\implies (4-n)2^{n-2} = n . \chi(X/G) - \sum_{d|n} n M(d) + \sum_{d|n} d M(d)\\
&\implies 2^n -n 2^{n-2} = n. \chi(X/G) - n \sum_{d|n} M(d) + 2^n\\
&\implies  \chi(\Z_\K(D^1,S^0)/C_n) = \sum_{d|n} M(d) - 2^{n-2}\\ 
&\implies \chi(\Z_\K(D^1,S^0)/C_n) = \frac{1}{n}\sum_{d|n} \phi(d)2^{n/d} - 2^{n-2}\\
&\implies g(\Z_\K(D^1,S^0)/C_n) = 1 + 2^{n-3} - \frac{1}{2n}\sum_{d|n} \phi(d)2^{n/d}
\end{aligned}
$$
So the quotient space $Z_{K_n}(D_1,S^0)/C_n$ has genus equal to 
$$ 1 + 2^{n-3} - \frac{1}{2}(\#\textit{ of n-length necklace with 2-coloring}) $$

\end{proof}

\begin{example}
For $n = 6$, $Z_{K_n}(D^1,S^0)$ is a surface with genus $1+ (6-4)2^{6-3} = 17$. Under the above formula, the genus of the quotient space $Z_{K_n}(D^1,S^0)/C_n$ is 
$$
1+ 2^3 - \frac{1}{2}(\textit{\#of 6-length necklace with 2-coloring})
$$
The number of $6$ length necklace with 2-coloring is exactly
$$
\frac{1}{6}\sum_{d|6} \phi(d)2^{6/d} = \frac{1}{6}(1.2^6+1.2^3+2.2^2+2.2) = 14
$$
So $Z_{K_n}(D^1,S^0)/C_n$ has genus $9-14/2 = 2$.

\end{example}

\subsection{ An upper bound for genus of quotient graph \texorpdfstring{$Q_n/C_n$}{Qn/Cn}} From the above discussion, we have proved the following lemma.
\begin{lemma}
The genus of the quotient graph, $Q_n/C_n$ has an upper bound: 
\begin{equation}
\gamma(Q_n/C_n) \leq 1 + 2^{n-3} - \frac{1}{2n}\sum_{d|n} \phi(d)2^{n/d}
\end{equation}
\end{lemma}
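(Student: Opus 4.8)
The plan is to obtain this inequality as an immediate consequence of the material already assembled, using only the definition of graph genus as the \emph{minimal} genus among closed orientable surfaces that admit an embedding of the graph. The decisive observation is that the equivariant embedding of $Q_n$ in $\M$ descends to an embedding of the quotient graph into the quotient surface, and the genus of that surface has just been computed in Lemma~\ref{my_lemma1}. So no new geometry is required; the statement is a packaging of earlier results.

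First I would recall the commuting square established in the previous section, whose bottom row exhibits $Q_n/C_n$ as a subspace, indeed a subgraph, of $\M/C_n$. Next I would invoke Theorem~\ref{theorem_quotient}, which guarantees that $\Z_\K(D^1,S^0)/C_n$ is a closed, compact, and orientable surface, so that it is a legitimate ambient surface in the sense of the genus definition given in Section~1. Then, by Lemma~\ref{my_lemma1}, this surface has genus
$$
g = 1 + 2^{n-3} - \frac{1}{2n}\sum_{d|n} \phi(d)2^{n/d}.
$$
Since $\gamma(G)$ is by definition the least genus of a closed orientable surface into which $G$ embeds, and $Q_n/C_n$ embeds into a surface of genus $g$, I would conclude $\gamma(Q_n/C_n) \le g$, which is exactly the asserted bound.

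The main obstacle, such as it is, has already been cleared by the preceding results: the genuine work lies in proving that the quotient is a closed orientable surface (Theorem~\ref{theorem_quotient} together with the orientability lemma) and in the Riemann--Hurwitz computation of its genus (Lemma~\ref{my_lemma1}). What remains for the present statement is only to confirm that the induced map $Q_n/C_n \to \M/C_n$ is a genuine embedding of graphs rather than merely a continuous surjection onto its image. This is precisely the content of the commuting diagram: because the inclusion $Q_n \hookrightarrow \M$ is $C_n$-equivariant, it passes to the quotients with the bottom arrow remaining an inclusion, so no edge crossings are introduced. Note that for the \emph{upper} bound one does not even need the embedding to be a 2-cell embedding; any embedding into a closed orientable surface of genus $g$ suffices. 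I expect no further subtlety, and the inequality rather than equality simply reflects the possibility that $Q_n/C_n$ might embed in a surface of still smaller genus.
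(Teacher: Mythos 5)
Your proposal is correct and follows essentially the same route as the paper, which likewise derives the bound immediately from the commuting square (giving the embedding $Q_n/C_n \hookrightarrow \M/C_n$), Theorem~\ref{theorem_quotient} (the quotient is a closed orientable surface), and Lemma~\ref{my_lemma1} (its genus), together with the definition of $\gamma$ as the minimal genus of an ambient surface. The paper in fact offers no separate proof beyond the phrase ``from the above discussion,'' so your write-up, including the remark that a 2-cell embedding is not needed for an upper bound, is if anything slightly more careful than the original.
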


\begin{remark}
It can be showed that theorem \ref{theorem_quotient} is also true for a subgroup $C_m\subset C_n$ where $m\lvert n$. In this paper we are not using that result, but I will add a proof of this fact in my PhD thesis. 

\end{remark} 
 
\section*{Acknowledgements}

The author is thankful to Professor Frederick Cohen for his insightful discussion on the real moment-angle complex and related topics. Also many thanks to the reviewer whose careful reading and suggestions have improved the paper.

\bibliographystyle{model1-num-names}
\bibliography{mybib.bib}

\end{document}